\newcommand{\arr}{\rightarrow}
\newcommand{\R}{\mathbb{R}}
\newcommand{\vol}{\omega^{\wedge n}}
\newcommand{\A}{\mathcal{A}}
\newcommand{\Symp}{\operatorname{Symp}(M,\omega)}
\newcommand{\Diff}{\operatorname{Diff}}
\newcommand{\Homeo}{\operatorname{Homeo}}
\newtheorem{theorem}{Theorem}
\newtheorem{corollary}[theorem]{Corollary}
\newtheorem*{question*}{Question}
\newtheorem{definition}[theorem]{Definition}
\newtheorem{lemma}[theorem]{Lemma}
\newtheorem{proposition}[theorem]{Proposition}
\newtheorem*{lemma*}{Lemma}
\newtheorem*{theorem*}{Theorem}
\newtheorem*{remark*}{Remark}
\newtheorem*{definition*}{Definition}
\newtheorem{remark}[theorem]{Remark}
\theoremstyle{remark}
\theoremstyle{definition}
\newtheorem*{claim*}{Claim}
\begin{document}

\title[No symplectic-Lipschitz structures on $S^{2n \geq 4}$]{No symplectic-Lipschitz structures on $S^{2n \geq 4}$}
\author{ Du\v{s}an Joksimovi\'c}
\address{Sorbonne Universit\'e, Universit\'e de Paris, CNRS, Institut de Math\'ematiques de Jussieu-Paris Rive Gauche, F-75005 Paris, France}
\email{joksimovic@imj-prg.fr}

\begin{abstract}
We prove that a closed manifold which admits a symplectic-Lipschitz structure has non-vanishing even-degree cohomology groups with real coefficients.  In particular, spheres $S^{2n}, n \geq 2$ do not admit symplectic-Lipschitz structures.
\end{abstract}

\maketitle

\section{Introduction and main results}

A smooth symplectic manifold is a smooth manifold $M$ equipped with a closed and non-degenerate 2-form $\omega.$
A \emph{symplectic diffeomorphism} (i.e. \emph{symplectomorphims}) is a diffeomorphism $\varphi: (M,\omega) \arr (M,\omega)$ which satisfies that $\varphi^*\omega = \omega.$ We denote by $\Symp$ the set of all symplectomorphisms of $(M,\omega).$

By a theorem due to Y. Eliashberg and M. Gromov it is known that $\Symp$ is closed w.r.t. $C^0$-topology (i.e. compact-open topology) inside the group of all diffeomorphisms $\Diff(M)$ (for a proof see e.g. \cite[Theorem 12.2.1]{MS}). This result is considered as the beginning of a subfield of symplectic geometry called \emph{$C^0$-symplecic geometry}. Roughly, it investigates which symplectic phenomena persist under $C^0$-limits. The central objects are 
\emph{symplectic homeomorphisms} which are elements of 
the closure   of $\Symp$ in the compact-open topology inside the group of all homeomorphisms $\Homeo(M).$ The notion of symplectic homeomorhpsims leads to the notion of symplectic topological manifolds.

\begin{definition*}[Symplectic topological manifolds]
\emph{A symplectic topological atlas} on a topological $2n$-dimensional manifold $M$ is a topological atlas $\{(U_i,\phi_i)\}_{i \in I}$ whose transition functions are symplectic homeomorphisms. A \emph{topological symplectic manifold} (or \emph{$C^0$-symplectic manifold}) is a topological manifold equipped with a maximal symplectic topological atlas.\footnote{Authors usually use the terms $C^0$-symplectic atlas and $C^0$-symplectic manifold. We decided to call them differently since we want to emphasize that we study topological manifolds with additional structure, and not smooth manifolds equipped with a certain continuous differential form.}  
\end{definition*}

Every smooth symplectic manifold trivially admits a symplectic topological structure. This naturally raises a question whether there are manifolds which admit a symplectic topological structure, but not a smooth symplectic structure.
The following question due to H. Hofer is one of the central open questions in $C^0$-symplecic geometry.

\begin{question*}[Hofer '90s]
Does $S^{2n \geq 4}$ admit a symplectic topological structure?
\end{question*}


The main result of this article answers the question in the negative if we replace the word ``topological'' by ``Lipschitz''.
More precisely, we prove that manifolds which admit weak symplectic-Lipschitz structures have non-vanishing even cohomology groups with real coefficients. 

To state the main result we first introduce the notions of Lipschitz manifolds and (weak) symplectic-Lipschitz manifolds.  

\begin{definition}[Lipschitz maps] \label{lip-map-def}
A map $f:(X,d_X) \arr (Y,d_Y)$ between metric spaces $(X,d_X)$ and $(Y,d_Y)$ is \emph{Lipschitz} if there exists $L \geq 0$ such that $$d_Y(f(x),f(x')) \leq Ld_X(x,x'),$$ for all $x,x' \in X.$
It is called \emph{bi-Lipschitz} if it is a homeomorphism and both $f$ and $f^{-1}$ are Lipschitz. 
\end{definition}

\begin{definition}[Lipschitz manifolds] \label{lip-manifold-def}
\emph{A Lipschitz manifold} $M$ of dimension $n$ is a second-countable locally
compact Hausdorff space $M$ equipped with a family of (Lipschitz) coordinate
charts $\{(U_a, \phi_a)\}_{a \in \A}$, satisfying the following conditions:
\begin{itemize}
    \item[(i)] sets $U_a, a \in \A$ form an open cover of $M,$
    \item[(ii)] each $\phi_a: U_a \arr \R^n$ is a homeomorphism onto its image, and
    \item[(iii)] the transition functions
    \begin{equation} \label{transition-maps}
        \Phi_{ab} := (\phi_b \circ \phi_a^{-1}) \vert_{\phi_a(U_a \cap U_b)}: \phi_a(U_a \cap U_b )  \arr \phi_b(U_a \cap U_b)
    \end{equation}
are locally bi-Lipschitz (with respect to the standard metric on $\R^n$).    
\end{itemize}

\end{definition}

The main objects of this article are the following.

\begin{definition}[Symplectic-Lipschitz manifolds] \label{lip-symp-def}
A manifold $M$ of dimension $2n$ is \emph{symplectic-Lipschitz} if it admits a Lipschitz atlas $\{(U_a, \phi_a)\}_{a \in \A}$ such that 
the transition maps \eqref{transition-maps} preserve the standard symplectic structure on $\R^{2n}$, i.e. 
${\Phi_{ab}}^*\omega_0 = \omega_0$ (Lebesgue) a.e.
Then the collection $\omega := \{\omega_0 \vert_{\phi_a(U_a)}\}_{a \in \A}$ is called a \emph{symplectic-Lipschitz form} (or \emph{symplectic-Lipschitz structure}) on $M.$
\end{definition}

The reason why Definition \ref{lip-symp-def} makes sense is mainly due to Rademacher's theorem (see Theorem \ref{rademacher} below) which allows us to define the pullback of a differntial form on $\R^{2n}$ by a bi-Lipschitz map (Lebesgue) a.e.. 
%
Moreover, the nice behaviour of bi-Lipschitz functions allows us to define an analog of differential forms (and standard operations on them) on Lipschitz manifolds, so-called \emph{flat forms}. In short, those are forms which in charts are represented by differential forms with coefficients in $L^{\infty}$ and which have the exterior derivative (in the sense of distributions) which is also an $L^{\infty}$ form. For more details we refer the reader to Section \ref{section-lip}. 

We can also extend the notion of non-degeneracy to flat 2-forms. Namely, a flat 2-form on an $2n$-dimensional Lipschitz manifold is called \emph{non-degenerate} if its $n$-th wedge power is represented in charts by an expression of the form $f dx_1 \wedge ... \wedge dx_{2n},$ where $f$ is an $L^{\infty}$ function s.t. $f>0$ a.e. (see Definition \ref{non-deg-form} in Section \ref{section-lip}).
These observations lead us to the following generalization of smooth symplectic manifolds.

\begin{definition}[Weak symplectic-Lipschitz manifolds] \label{weak-lip-mfld}
\emph{A weak symplectic-Lipschitz manifold} is a
Lipschitz manifold $M$ equipped with a closed (in the sense of distributions) and non-degenerate flat 2-form.
\end{definition}

Let us note that every (weak) symplectic-Lipschitz manifold is orientable, see Remark \ref{orientability} on p. \pageref{orientability}.

\begin{remark*}[Symplectic-Lipschitz manifolds] \label{rmk-symp-lip-mfld} \normalfont
We proposed two different definitions of symplectic-Lipschitz manifolds.
Examples of weak symplectic-Lipschitz manifolds (see Definition \ref{weak-lip-mfld}) include all symplectic-Lipschitz manifolds (see Definition \ref{lip-symp-def}). For the proof see Proposition \ref{lip-symp-equiv} in Section \ref{section-proof}.
Up to our knowledge it is not known whether they are equivalent, i.e. whether Darboux theorem holds in the setting of Lipschitz manifolds and flat forms. According to the literature, this question was raised first by D. Sullivan, see p. 65 in \cite{Hainonen-lecture-notes}.
\end{remark*}

We are now ready to state the main result of this article.

\begin{theorem} \label{main-thm}
Let $(M,\omega)$ be a closed (compact and without boundary) weak symplectic-Lipschitz manifold of dimension $2n.$ Then the singular cohomology group $H^{2k}(M, \R) \neq 0,$ for every $k \in \{1,2,...,n\}.$
\end{theorem}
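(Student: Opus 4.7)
The strategy is to adapt the classical Liouville-type argument from smooth symplectic topology. In the smooth case one observes that $\omega^n$ is a positive volume form, so $\int_M \omega^n > 0$ shows $[\omega^n] \neq 0$ in $H^{2n}(M, \R)$; then the cohomology-ring identity $[\omega]^k \smile [\omega]^{n-k} = [\omega^n]$ forces $[\omega]^k \neq 0$ for every $k \in \{1, \ldots, n\}$, which in particular gives $H^{2k}(M, \R) \neq 0$. I would lift this argument verbatim to the flat setting of Definition \ref{weak-lip-mfld}.

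To execute the plan, I would first want to verify that the flat-form formalism supports the following structure: (i) wedge products of flat forms are flat, so that $\omega^k$ is a well-defined flat $2k$-form; (ii) the Leibniz rule holds for the distributional exterior derivative, so that closedness of $\omega$ gives closedness of each $\omega^k$; and (iii) there is a flat de Rham-type theorem producing a ring isomorphism from the cohomology of the flat complex to singular cohomology $H^*(M, \R)$ that is compatible with integration in top degree. Items (i) and (ii) are local statements that reduce, via bi-Lipschitz coordinate changes and Rademacher's theorem, to checking the algebraic structure of $L^\infty$ forms whose distributional derivatives are in $L^\infty$; item (iii) is a classical theorem of Whitney and Sullivan on Lipschitz manifolds, which I would invoke rather than reprove.

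Once this infrastructure is in hand the argument is brief. Non-degeneracy of $\omega$ means that $\omega^n$ is represented in each chart by an $L^\infty$ function $f \, dx_1 \wedge \cdots \wedge dx_{2n}$ with $f > 0$ almost everywhere; combined with the orientability of $M$ remarked on after Definition \ref{weak-lip-mfld}, this yields a well-defined and strictly positive integral $\int_M \omega^n > 0$. Via the flat de Rham isomorphism this gives $[\omega^n] \neq 0$ in $H^{2n}(M, \R)$, and the cup-product relation $[\omega]^k \smile [\omega]^{n-k} = [\omega^n] \neq 0$ then forces $[\omega]^k \neq 0$ in $H^{2k}(M, \R)$ for every $k \in \{1, \ldots, n\}$.

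The main obstacle, I expect, is item (iii): making sure that the flat wedge product corresponds under the Whitney-Sullivan isomorphism to the singular cup product, since pointwise products of arbitrary $L^\infty$ forms need not be flat and distributional Leibniz rules require care. The saving grace is that we only ever need to multiply powers of a single fixed flat form $\omega$, so the required products exist pointwise a.e.\ and the Leibniz identity can be verified in charts. A secondary subtlety is the orientation needed to integrate $\omega^n$; here the positivity of the top flat form itself supplies a globally coherent orientation on $M$, which I would record as a preliminary remark.
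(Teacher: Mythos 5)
Your proposal is correct and its computational core is the same as the paper's: both arguments come down to $\int_M \omega^{\wedge k}\wedge\omega^{\wedge(n-k)}=\int_M\vol>0$, which forces $[\omega^{\wedge k}]\neq 0$. The one organizational difference is worth noting because it dissolves what you flag as your ``main obstacle.'' You transfer everything to singular cohomology first and then argue with the cup product, which is why you need the flat de Rham isomorphism to be a \emph{ring} isomorphism compatible with integration. The paper instead stays entirely inside the flat complex: it invokes the $L^{\infty}$ Poincar\'e duality pairing (Theorem \ref{PD}, due to Teleman) to conclude $[\omega^{\wedge k}]\neq 0$ in $H^{2k}_{L^{\infty},dR}(M)$, and only at the very last step uses the \emph{additive} isomorphism $H^{*}_{L^{\infty},dR}(M)\cong H^{*}(M,\R)$ of Theorem \ref{isomorphic} to deduce $H^{2k}(M,\R)\neq 0$. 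So the multiplicativity of the de Rham comparison map is never needed; what is needed is only that the wedge-integration pairing is well defined and bilinear on flat cohomology classes, which is exactly the content of the Poincar\'e duality statement being cited. (In fact even the non-degeneracy of that pairing is more than necessary: well-definedness alone gives the contradiction if $[\omega^{\wedge k}]=0$.) Your preliminary points (i) and (ii) --- that powers of a single closed flat form are flat and closed, and that orientability comes for free from positivity of $\omega^{\wedge n}$ --- match the paper's treatment (see Remark \ref{orientability}), so with the Poincar\'e duality route substituted for the cup-product route your argument is complete.
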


An immediate corollary of Theorem \ref{main-thm} is the following.

\begin{corollary}
For $n \geq 2,$ the sphere $S^{2n}$ does not admit a (weak) symplectic-Lipschitz structure. 
\end{corollary}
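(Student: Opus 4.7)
The plan is to derive the corollary as an immediate consequence of Theorem \ref{main-thm}. I argue by contradiction: suppose $S^{2n}$ admits a weak symplectic-Lipschitz structure for some $n \geq 2$. Since $S^{2n}$ is closed (compact and without boundary), the hypotheses of Theorem \ref{main-thm} are satisfied, so the theorem yields $H^{2k}(S^{2n}, \R) \neq 0$ for every $k \in \{1, 2, \ldots, n\}$.

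However, the singular cohomology of the even-dimensional sphere with real coefficients is classical and concentrated in degrees $0$ and $2n$, both isomorphic to $\R$, vanishing in all other degrees. Since $n \geq 2$, the integer $k = 1$ satisfies $1 \leq k \leq n$ while $2k = 2$ lies strictly between $0$ and $2n$, so $H^{2}(S^{2n}, \R) = 0$. This directly contradicts the conclusion of Theorem \ref{main-thm} applied with $k=1$, completing the argument.

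There is no real obstacle here beyond invoking the theorem and a standard cohomology computation (provable, for instance, via a CW decomposition of $S^{2n}$ with one $0$-cell and one $2n$-cell, or by Mayer--Vietoris). The hypothesis $n \geq 2$ is essential: for $n = 1$ the theorem's conclusion $H^{2}(S^2, \R) \neq 0$ is satisfied by the area form, and indeed $S^2$ carries a standard smooth symplectic structure, hence in particular a symplectic-Lipschitz one. It is also worth noting that a much weaker version of Theorem \ref{main-thm}, asserting merely that $H^{2}(M, \R) \neq 0$ for any closed weak symplectic-Lipschitz manifold of dimension at least $4$, would already suffice to deduce the corollary, since a single nonvanishing intermediate even cohomology group is enough to obstruct the sphere.
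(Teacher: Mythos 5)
Your proof is correct and is exactly the argument the paper intends (the paper leaves the corollary as an immediate consequence of Theorem \ref{main-thm}): apply the theorem with $k=1$ and use $H^{2}(S^{2n},\R)=0$ for $n\geq 2$. The parenthetical ``(weak)'' in the statement is covered as well, since by Proposition \ref{lip-symp-equiv} a symplectic-Lipschitz structure is in particular a weak one, so your contradiction handles both cases.
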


The proof exploits the fact that Lipschitz manifolds have well-defined de Rham complex (the complex of \emph{flat forms}) which shares similar properties as de Rham complex in the smooth case (see Section \ref{section-lip}). 
After noticing this fact the proof goes analogously as in the smooth case by proving that classes of the wedge powers of the symplectic-Lipschitz form are not trivial.

\begin{remark*}[Topological vs Lipschitz manifolds] \label{c0-vs-lip} \normalfont
Categories of topological and Lipschitz manifolds are almost equivalent in dimensions other than 4.
More precisely, in \cite{Sullivan} D. Sullivan proved that every topological manifold of dimension $n \neq 4$ admits a unique Lipschitz structure. In this sense the Lipschitz condition from Theorem \ref{main-thm} a priori does not impose strong restrictions.

The analogous result between the categories of Lipschitz and smooth manifolds does not hold since not every Lipschitz manifold is smoothable (see e.g. \cite[Theorem 1.1]{Heinonen-Keith}). 
Notice that such a result, by Sullivan's theorem, would imply that every topological manifold is smoothable.   
\end{remark*}

The remark above motivates the question whether Sullivan's theorem holds in the symplectic category (maybe even in dimension 4), i.e. whether every symplectic topological manifold admits a (unique) symplectic-Lipschitz structure.  
In the case that it holds (even without the uniqueness part), together with Theorem \ref{main-thm}, it would imply a negative answer to the question by H. Hofer.


\subsection*{Organization} In Section 2 we give an overview on results from analyis on Lipschitz manifolds that we are going to use in the proof of Theorem \ref{main-thm}. Section \ref{section-proof} is devoted to the proof of Theorem \ref{main-thm}.

\subsection*{Acknowledgement} I would like to thank Fabian Ziltener for his advise to look at Lipschitz category as an intermediate step in understanding $C^0$-symplectic structures as well as for many inspiring discussions. 
I would also like to thank Sobhan Seyfaddini for an interesting discussion, and Dennis Sullivan for sharing his expertise on Lipschitz manifolds.
The work on this project was funded by ERC Starting Grant No. 851701: homeomorphisms in symplectic topology and dynamics (HSD).

\section{Analysis on Lipschitz manifolds} \label{section-lip}

In this section we recall some of the standard results from the analysis of Lipschitz functions and its applications to Lipschitz manifolds. 
For more details on the subject we refer the reader to \cite{Sullivan-Donaldson,Goldstein,Hainonen-lecture-notes,Rosenberg,Sullivan,Sullivan-survey,Teleman-book,Teleman, Whitney} and references therein.

One of the main reasons which allows us to apply many analytical tools to Lipschitz functions is that they behave similar to $C^1$-functions. This is the statement of the following classical result (see also \cite[Theorem 5.20]{Teleman-book}).

\begin{theorem}[Rademacher's theorem] \label{rademacher}
Let $U,V \subseteq \R^n$ be open sets and $f: U \arr V$ a Lipschitz map. Then for all $1 \leq i \leq n$ the following hold:
\begin{itemize}
    \item[(i)] partial derivatives $\frac{\partial f}{\partial x_i}$ exist (Lebesgue) almost everywhere on $U$ in the strong sense, and they are also weak derivatives of $f,$
    \item[(ii)] $\frac{\partial f}{\partial x_i} \in L^{\infty}(U)$ w.r.t. Lebesgue measure.
\end{itemize}
\end{theorem}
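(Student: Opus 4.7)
The plan is to reduce to real-valued $f$ componentwise and then handle the three assertions in turn: existence of strong partial derivatives by slicing plus the one-dimensional Lebesgue differentiation theorem, the $L^{\infty}$ bound by a trivial difference-quotient estimate, and identification with the distributional derivative by approximating $\partial_i f$ with bounded difference quotients and invoking dominated convergence. Since $f:U\to V\subseteq\R^n$ is $L$-Lipschitz, each component $f^j:U\to\R$ is $L$-Lipschitz and the three claims are coordinatewise, so I fix a real-valued $L$-Lipschitz $f$ and a direction $e_i$.

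For the a.e.\ existence of $\partial_i f$ in the strong sense, I would write $x=(\hat x,t)$ with $t$ the $i$-th coordinate, and for each $\hat x$ consider the slice $g_{\hat x}(t):=f(\hat x,t)$, defined on the open set $U_{\hat x}:=\{t:(\hat x,t)\in U\}\subseteq\R$. This slice is $L$-Lipschitz in $t$, hence absolutely continuous on every compact subinterval of $U_{\hat x}$, so the classical one-dimensional theorem yields differentiability at almost every $t\in U_{\hat x}$. To apply Fubini cleanly I first replace $\partial_i f$ by its Borel representative $\limsup_{h\to 0,\ h\in \mathbb{Q}\setminus\{0\}}(f(x+he_i)-f(x))/h$ so that the exceptional set is measurable; combined with the slicewise a.e.\ existence, Fubini then gives that the set $N_i\subseteq U$ where $\partial_i f$ does not exist has $n$-dimensional Lebesgue measure zero.

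For (ii) and the second half of (i), I first note that wherever $\partial_i f$ exists it is a limit of difference quotients of modulus at most $L$, so $\|\partial_i f\|_{L^\infty(U)}\leq L$, settling (ii). To identify $\partial_i f$ with the distributional partial derivative, take $\varphi\in C_c^\infty(U)$; for all sufficiently small $|h|$ a change of variables yields
$$\int_U f(x)\,\frac{\varphi(x-he_i)-\varphi(x)}{-h}\,dx \;=\; -\int_U \frac{f(x+he_i)-f(x)}{h}\,\varphi(x)\,dx.$$
As $h\to 0$ the left-hand side converges to $\int_U f\,\partial_i\varphi\,dx$ by uniform convergence of the difference quotient of $\varphi$, while the right-hand side converges to $-\int_U(\partial_i f)\,\varphi\,dx$ by dominated convergence, using the dominant $L|\varphi|\in L^1$ and the a.e.\ limit $\partial_i f$ from the previous step. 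The main input is the one-dimensional a.e.\ differentiability theorem for absolutely continuous functions, which I would cite as classical rather than reprove; the rest is Fubini and bounded/dominated convergence applied to difference quotients, so there is no genuinely hard step in the multivariable passage.
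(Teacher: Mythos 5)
Your proof is correct for the statement as written, but it is worth noting that the paper does not prove this theorem at all: it is quoted as a classical result with a pointer to Teleman's book (Theorem 5.20 there), so there is no internal argument to compare against. Your self-contained route --- componentwise reduction, slicing plus the one-dimensional a.e.\ differentiability of absolutely continuous functions, a Borel representative via rational difference quotients to make Fubini legitimate, the trivial bound $|\partial_i f|\leq L$, and the difference-quotient integration by parts with dominated convergence --- is the standard elementary proof of exactly what is asserted, and all the delicate points (measurability of the exceptional set, the change of variables being valid for small $|h|$ because $\operatorname{supp}\varphi\Subset U$, and the fact that for continuous $f$ the limit along rational increments detects the true partial derivative) are handled. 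One caveat you should be aware of: what you prove is a.e.\ existence of the \emph{partial} derivatives, which is all the theorem claims, but it is strictly weaker than the full Rademacher theorem (a.e.\ total differentiability), and the stronger version is what is implicitly needed later in the paper for the a.e.\ chain rule underlying the pullback identities of Proposition \ref{comutativity}; your slicing argument does not yield that, so if the statement were upgraded to total differentiability you would need the additional step (e.g.\ a.e.\ directional differentiability for a countable dense set of directions plus a uniformity argument). As a minor point, the one-dimensional input you invoke is Lebesgue's theorem on a.e.\ differentiability of monotone/absolutely continuous functions, not the ``Lebesgue differentiation theorem'' on averages of integrable functions; your text makes the intended meaning clear, but the name should be corrected.
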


As a consequence of Theorem \ref{rademacher} the Jacobi matrix $J_f$ of a given Lipschitz function is well-definied a.e. and its elements are of class $L^{\infty}.$

We first define $L^{\infty}$ forms on $\R^n$ which serve as a local model (i.e. form in charts) for $L^{\infty}$ forms on Lipschitz manifolds. 

\begin{definition}[$L^{\infty}$ forms on $\R^n$] \label{Rn-forms}
An expression of the form
$$\alpha = \sum_I a_I dx^I := \sum_{1 \leq i_1 < ... < i_k \leq n} a_{i_1,...,i_k} dx_{i_1} \wedge ... \wedge dx_{i_k},$$
is called an \emph{$L^{\infty}$ $k$-form} if $a_I := a_{i_1,...,i_k} \in L^{\infty}(\R^n),$ for all multi-indices $I=(i_1,...,i_k).$
\end{definition}

For example, by Theorem \ref{rademacher} we have that a Lipschitz map $f: \R^n \arr \R$ induces an $L^{\infty}$ 1-form
$$df := \sum_{i=1}^n \frac{\partial f_i}{\partial x_i} dx_i.$$

Next we explain how one can extend the standard operations on differential forms (pullback, exterior differential, and wedge product) to $L^{\infty}$ forms. After that we will use those definitions to define $L^{\infty}$ de Rham complex on Lipschitz manifolds.

The wedge product of two $L^{\infty}$ forms is defined analogously to the smooth case.

\begin{definition}[Wedge product of $L^{\infty}$ forms]
Let $\alpha = \sum_I a_I dx^I$ be an $L^{\infty}$ $k$-form and $\beta = \sum_J b_J dx^J$ be an $L^{\infty}$ $l$-form on $U \subseteq \R^n.$ \emph{The wedge product} of $\alpha$ and $\beta$ denoted by $\alpha \wedge \beta$ is the $L^{\infty}$ $(k+l)$-form defined as follows
$$\alpha \wedge \beta := \sum_{I,J} (a_I b_J) dx^I \wedge dx^J.$$
\end{definition}

The following notions are used to define weak symplectic-Lipschitz manifolds.

\begin{definition}[$L^{\infty}$ volume forms] \label{volume-form}
A top degree $L^{\infty}$ form on $U \subseteq \R^n$ is called an \emph{$L^{\infty}$ volume form} on $U$ if it is of the form $f dx_1 \wedge ... dx_n,$ for some $f \in L^{\infty}(U)$ which satisfies $f >0$ a.e. on $U.$
\end{definition}

\begin{definition}[Non-degenerate $L^{\infty}$ 2-forms] \label{non-deg-form}
An $L^{\infty}$ 2-form $\alpha$ on $U \subseteq \R^{2n}$ is called \emph{non-degenerate} if $\alpha^{\wedge n}$ is an $L^{\infty}$ volume form.
\end{definition}

Next we define the pullback by a bi-Lipschitz map. 
For that we will need the following lemma. The proof is standard and is left to the reader.

\begin{lemma} \label{zero-measure-lemma}
Let $U$ and $V$ be open sets in $\R^n$ and $f: U \arr V$ a Lipschitz map. Then $f$ maps a set of Lebesgue measure zero to a set of Lebesgue measure zero.
\end{lemma}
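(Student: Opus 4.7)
The plan is to use the standard outer measure / covering characterization of Lebesgue null sets together with the observation that a Lipschitz map with constant $L$ maps a ball of radius $r$ into a ball of radius $Lr$. This is a classical argument, and I do not expect any serious obstacle; the only minor point to watch is that $f(N)$ is a priori only a subset of $\R^n$, not obviously measurable, so the conclusion will be phrased in terms of outer measure and then promoted to measurability via completeness of the Lebesgue measure.

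Let $N \subseteq U$ be a set with $|N| = 0$, where $|\cdot|$ denotes Lebesgue outer measure on $\R^n$, and let $L \geq 0$ be a Lipschitz constant for $f$. Fix $\varepsilon > 0$. By the definition of outer measure, I would first cover $N$ by a countable family of open balls $B(x_i, r_i) \subseteq U$ (shrinking them if necessary so that they are contained in $U$) with total volume $\sum_i |B(x_i, r_i)| < \varepsilon$; the fact that $U$ is open lets me assume $B(x_i, r_i) \subseteq U$ for each $i$, possibly after first writing $N$ as a countable union of subsets, each contained in a compact subset of $U$.

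The Lipschitz bound then gives $f(B(x_i, r_i)) \subseteq B(f(x_i), L r_i)$, hence
\[
f(N) \subseteq \bigcup_i B(f(x_i), L r_i),
\]
and consequently
\[
|f(N)| \leq \sum_i |B(f(x_i), L r_i)| = L^n \sum_i |B(x_i, r_i)| < L^n \varepsilon.
\]
Since $\varepsilon > 0$ was arbitrary, the outer measure of $f(N)$ is zero. By completeness of Lebesgue measure, every subset of a null set is Lebesgue measurable, so $f(N)$ is measurable with measure zero, which is the desired conclusion.
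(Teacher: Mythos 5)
Your proof is correct: it is the standard covering argument (Lipschitz maps send balls of radius $r$ into balls of radius $Lr$, so outer measure scales by at most $L^n$), and the paper itself omits the proof, stating only that it is standard and left to the reader. The one point you rightly flag --- reducing to pieces of $N$ compactly contained in $U$ so that the covering balls stay inside the domain of $f$, and invoking completeness of Lebesgue measure to conclude measurability of $f(N)$ --- is handled adequately.
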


\begin{definition}[Pullback by a bi-Lipschitz function]
Let $U \subseteq \R^n$ and $V \subseteq \R^m$ be open sets, 
\begin{equation*}
f: U \arr V, \quad x:=(x_1,...,x_n) \mapsto f(x) := (f_1(x), ..., f_m(x))  
\end{equation*}
a bi-Lipschitz map, and $\alpha = \sum_{I} a_{I} dx^I$ an $L^{\infty}$ $k$-form on $V.$ 
We define $f^*\alpha$ to be the $L^{\infty}$ $k$-form on $U$ given by
\begin{equation}\label{pullback}
f^*\alpha := \sum_{I} (a_{I} \circ f) \hspace{1mm} df^I,    
\end{equation}
where $df^I := df_{i_1} \wedge ... \wedge df_{i_k}$ for a multi-index $I=(i_1,...,i_k).$
\end{definition}

Let us note that the pullback is well-defined for the following two reasons:
\begin{itemize}
    \item $a_I \circ f$ are defined a.e. on $U.$ This follows from Lemma \ref{zero-measure-lemma} and the fact that $f$ is bi-Lipschitz,  
    
    \item $a_I \circ f \in L^{\infty}(U).$ 
\end{itemize}

\begin{remark*} \normalfont
It is possible to define the pullback by a function which is just Lipschitz, but in that case one should take care of the fact that $f$ can map an open set into a set where the $a_I$'s are not defined. For more details we refer the reader to \cite[Theorem 5.24]{Hainonen-lecture-notes}. For our purposes it is enough to define the pullback by a bi-Lipschitz map since we are going to work with Lipschitz manifolds.
\end{remark*}

Next we define the exterior derivative of $L^{\infty}$ forms. 

\begin{definition}[Exterior derivative of $L^{\infty}$ forms] \label{ext-dif}
Let $\alpha$ be an $L^{\infty}$ $k$-form on an open set $U \subseteq \R^n.$ Then $\alpha$ is said to have \emph{distributional exterior derivative} if there exists a $(k+1)$-form $d \alpha$ with measurable coefficients on $U$ such that for every smooth $(n-k-1)$-form $\sigma$ with compact support in $U$ the following holds:
$$\int_U \alpha \wedge d\sigma = (-1)^{k+1} \int_U d\alpha \wedge \sigma.$$
\end{definition}

The following proposition is crucial for defining a chain complex of flat forms, see \cite{Teleman} for the proof. 

\begin{proposition} \label{d^2=0} \normalfont
The exterior derivative satisfies $d \circ d = 0.$   
\end{proposition}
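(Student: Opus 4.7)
The plan is to mimic the standard proof that $d^2=0$ for smooth forms, which amounts to applying the distributional definition twice and shifting both derivatives onto a smooth test form, where the classical identity $d^2=0$ applies.

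Concretely, let $\alpha$ be an $L^{\infty}$ $k$-form on $U \subseteq \R^n$ with distributional exterior derivative $d\alpha$ in the sense of Definition \ref{ext-dif}. To verify that $d\alpha$ has distributional exterior derivative equal to zero, fix an arbitrary smooth $(n-k-2)$-form $\sigma$ with compact support in $U$. Then $d\sigma$ is a smooth $(n-k-1)$-form with compact support, hence is an admissible test form for $\alpha$. Applying Definition \ref{ext-dif} to $\alpha$ tested against $d\sigma$ gives
$$\int_U \alpha \wedge d(d\sigma) \;=\; (-1)^{k+1} \int_U d\alpha \wedge d\sigma.$$
Since $\sigma$ is smooth, the classical identity $d \circ d = 0$ for smooth forms yields $d(d\sigma) = 0$, so the left-hand side vanishes, and therefore $\int_U d\alpha \wedge d\sigma = 0$ for every such $\sigma$. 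By Definition \ref{ext-dif} applied now to the $(k+1)$-form $d\alpha$, this is exactly the statement that $d\alpha$ has distributional exterior derivative equal to zero, i.e.\ $d(d\alpha)=0$.

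The main conceptual point to be careful about, rather than a genuine obstacle, is that the form $d\alpha$ produced by Definition \ref{ext-dif} only has measurable coefficients a priori, not $L^{\infty}$ ones. Consequently, the statement $d \circ d = 0$ has to be read as the distributional identity that the pairing $\sigma \mapsto \int_U d\alpha \wedge d\sigma$ vanishes on all smooth compactly supported test forms $\sigma$; the argument above then shows precisely this, with no regularity needed beyond what Definition \ref{ext-dif} already guarantees. Everything else is just the smooth Leibniz--Stokes identity packaged into the defining pairing, so no additional analytic input (such as Rademacher's theorem or Lemma \ref{zero-measure-lemma}) is required at this step.
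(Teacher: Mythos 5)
Your proof is correct. The paper does not actually spell out an argument for this proposition---it simply cites Teleman---so there is nothing to compare line by line; your argument is the standard one that such a citation is standing in for: test $d\alpha$ against $d\sigma$ for a smooth compactly supported $(n-k-2)$-form $\sigma$, use the defining identity of the distributional exterior derivative with $d\sigma$ as the test form, and invoke the classical $d(d\sigma)=0$. The sign bookkeeping is irrelevant since the right-hand side vanishes, and your closing remark correctly identifies the only point needing care, namely that the conclusion is the distributional statement that the zero form serves as $d(d\alpha)$ (which, by the uniqueness of the distributional derivative, is all that $d\circ d=0$ means here; in the flat-form setting $d\alpha$ is moreover $L^{\infty}$, so all the pairings involved are finite). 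No further analytic input is needed, as you say.
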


The following notion of \emph{flat forms}, introduced by H.~Whitney \cite{Whitney}, is central in defining de Rham cohomology on Lipschitz manifolds. 

\begin{definition}[Flat forms]
An $L^{\infty}$ form $\alpha$ on $U \subseteq \R^n$ is called \emph{flat} if it has distributional exterior derivative $d\alpha$ which is an $L^{\infty}$ form on $U$. 
We denote by $\Omega_{flat}^k (U)$ the set of all flat $k$-forms on $U,$ and by $\Omega_{flat}^* (U)$ the set of all flat forms on $U.$ 
\end{definition}

\begin{remark*} \normalfont
Notice that by Proposition \ref{d^2=0} all \textit{exact} $L^{\infty}$ forms are flat. 
\end{remark*}

\begin{proposition} \label{comutativity}
Let $U, V \subseteq \R^n$ be open sets with compact closure, $f: U \arr V$ a bi-Lipschitz map and $\alpha, \beta$ be $L^{\infty}$ forms. Then the following hold:
\begin{align*}
    f^* d\alpha &= d (f^*\alpha), \\
    f^* (\alpha \wedge \beta) &= f^* \alpha \wedge f^* \beta.
\end{align*}

\end{proposition}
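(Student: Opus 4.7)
My plan is to handle the two identities in sequence, reducing to monomial forms by linearity. For the wedge-product identity it suffices to check the case $\alpha = a_I\, dx^I$, $\beta = b_J\, dx^J$. Rademacher's theorem (Theorem \ref{rademacher}) makes each $df_j$ an $L^{\infty}$ one-form on $U$, and the ordinary graded-commutative algebra of wedge products of differentials applies pointwise wherever $Df$ exists in the strong sense, i.e.\ almost everywhere. Lemma \ref{zero-measure-lemma} applied to both $f$ and $f^{-1}$ ensures that the compositions $a_I \circ f$ and $b_J \circ f$ are defined a.e.\ on $U$ and remain in $L^{\infty}(U)$, so that both sides of the identity reduce to $(a_I \circ f)(b_J \circ f)\, df^I \wedge df^J$ almost everywhere.

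For the identity $f^* d\alpha = d(f^*\alpha)$, I would begin with the $0$-form case: for $a \in L^{\infty}(V)$ with $da \in L^{\infty}$, the composition $a \circ f$ is Lipschitz, so by Rademacher its distributional partial derivatives coincide with its pointwise a.e.\ partials. The classical chain rule $D(a \circ f)(x) = Da(f(x)) \cdot Df(x)$ then holds at a.e.\ $x \in U$: the only subtlety is that $Da$ must exist at $f(x)$, which is guaranteed for a.e.\ $x$ because Lemma \ref{zero-measure-lemma} applied to $f^{-1}$ sends the null set of bad points in $V$ to a null set in $U$. Writing this identity in components yields $d(a \circ f) = f^*(da)$. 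For a monomial $\alpha = a_I\, dx^I$, I would then invoke a distributional Leibniz rule together with Proposition \ref{d^2=0} (which gives $d(df_{i_j}) = 0$ and, by Leibniz applied inductively, $d(df^I) = 0$) to compute
\[
d(f^*\alpha) = d\bigl((a_I \circ f)\, df^I\bigr) = d(a_I \circ f) \wedge df^I = f^*(da_I) \wedge df^I = f^*(da_I \wedge dx^I),
\]
where the last step uses the wedge-product identity already established. Summing over $I$ by linearity yields $d(f^*\alpha) = f^*(d\alpha)$.

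The principal obstacle is establishing the distributional Leibniz rule for flat forms: if $\alpha, \beta$ are flat then so is $\alpha \wedge \beta$, with $d(\alpha \wedge \beta) = d\alpha \wedge \beta + (-1)^{\deg \alpha}\, \alpha \wedge d\beta$. I would prove this first in the case that $\alpha$ is smooth, by testing both sides against a smooth compactly supported form and invoking the ordinary Leibniz rule for the smooth factor $\alpha \wedge \sigma$ together with the distributional definition of $d\beta$ from Definition \ref{ext-dif}. The general flat-flat case would then follow by mollifying one factor, setting $\alpha_\epsilon := \alpha * \rho_\epsilon$ on a slightly shrunken subdomain so that the convolution remains inside $V$; one has $\alpha_\epsilon \to \alpha$ and $d\alpha_\epsilon = (d\alpha) * \rho_\epsilon \to d\alpha$ in $L^p_{\mathrm{loc}}$ for every $p < \infty$, so the uniform $L^{\infty}$ bounds on $\beta$ and $d\beta$ allow passage to the limit in the smooth Leibniz identity for $\alpha_\epsilon \wedge \beta$. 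Once Leibniz is in hand the computation above closes and the proposition follows. The compactness of $\overline{U}, \overline{V}$ enters only to guarantee that these mollifications and the $L^{\infty}$ estimates are uniform on the domains of interest.
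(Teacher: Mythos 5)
The paper does not actually prove this proposition; it cites \cite[p.~155]{Goldstein}, so a self-contained argument is a genuine addition. Your treatment of the wedge identity is correct (it is essentially algebraic from the definition of $f^*$), your $0$-form case is correct (bi-Lipschitzness is exactly what legitimizes the a.e.\ chain rule, via Lemma \ref{zero-measure-lemma} applied to $f^{-1}$), and your mollification proof of the Leibniz rule is sound. However, there is a genuine gap in your reduction of $f^* d\alpha = d(f^*\alpha)$ to monomials: you treat a flat $k$-form $\alpha = \sum_I a_I\, dx^I$ as a sum of monomials and apply the $0$-form case to each coefficient, writing $d\alpha = \sum_I da_I \wedge dx^I$ and $d(a_I\circ f) = f^*(da_I)$. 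This presupposes that each $a_I$ is itself a flat $0$-form, i.e.\ (locally) Lipschitz, which flatness of $\alpha$ does not imply. For instance, on $\R^2$ the form $\alpha = g(x_1)\, dx_1$ with $g \in L^{\infty}$ bounded measurable and nowhere differentiable is flat (its distributional exterior derivative vanishes), yet $dg$ is not an $L^{\infty}$ form and $g \circ f$ is not Lipschitz, so your displayed computation is meaningless for such $\alpha$. Flatness constrains only the alternating combination of derivatives surviving in $d\alpha$, not the individual partials of the coefficients.

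The gap is repairable with the very tool you already deploy for the Leibniz rule, applied one level higher: mollify $\alpha$ itself, setting $\alpha_\epsilon := \alpha * \rho_\epsilon$ on a slightly shrunken domain, so that $d\alpha_\epsilon = (d\alpha) * \rho_\epsilon$, both with uniform $L^{\infty}$ bounds and converging a.e.\ and in $L^p_{\mathrm{loc}}$ for $p < \infty$. The coefficients of $\alpha_\epsilon$ are smooth, hence Lipschitz, so your monomial argument applies verbatim and yields $d(f^*\alpha_\epsilon) = f^*(d\alpha_\epsilon)$. Then $f^*\alpha_\epsilon \to f^*\alpha$ and $f^* d\alpha_\epsilon \to f^* d\alpha$ in $L^1_{\mathrm{loc}}$ (the a.e.\ convergence of $a_{I,\epsilon} \circ f$ uses once more that $f^{-1}$ carries the null set of non-Lebesgue points to a null set, plus dominated convergence), and distributional exterior derivatives pass to such limits, giving $d(f^*\alpha) = f^*(d\alpha)$ for every flat $\alpha$. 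With that approximation step inserted, your argument becomes a complete proof of what the paper only quotes from the literature.
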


\begin{proof}
See \cite[p. 155]{Goldstein}. 
\end{proof}

We are now able to define $L^{\infty}$ and flat forms on Lipschitz manifolds. 
For that let $M$ be an $n$ dimensional oriented and closed Lipschitz manifold, and  $\{(U_a, \phi_a)\}_{a \in \A}$ its (maximal, oriented) Lipschitz atlas.

\begin{definition}[$L^{\infty}$ forms on Lipschitz manifolds]
\emph{An $L^{\infty}$ $k$-form $\alpha$ on $M$} is a collection
$$\alpha := \{\alpha_{a} \}_{a \in \A},$$ such that: 
\begin{itemize}
    \item[(i)] $\alpha_a$ is an $L^{\infty}$ $k$-form on $\phi_a(U_a) \subseteq \R^n,$
    \item[(ii)] for every $a,b \in \A$ it holds that ${\Phi_{a,b}}^*\alpha_b = (\phi_b \circ \phi_a^{-1})^*\alpha_b = \alpha_a.$
\end{itemize}
\end{definition}

Definition \ref{ext-dif} naturally extends to $L^{\infty}$ forms on Lipschitz manifolds.
Namely, we say that $\alpha = \{\alpha_a\}_{a \in \A}$ has the \textit{exterior derivative} $d\alpha$ if for every $a\in \A$ the form $\alpha_a$ has the exeterior derivative $d\alpha_a.$
By Proposition \ref{comutativity} it follows that the exterior derivative $d$ is well-defined, i.e.
$d\alpha = \{ d \alpha_a\}_{a \in \A}.$
Moreover, Proposition \ref{d^2=0} and \ref{comutativity} imply that  $d$ satisfies $d \circ d =0.$ 
In the same manner we can extend Definitions \ref{volume-form} and \ref{non-deg-form} to Lischitz manifolds.

\begin{definition}[Flat forms on Lipschitz manifolds]
An $L^{\infty}$ $k$-form $\alpha := \{\alpha_{a} \}_{a \in \A}$ on $M$ is called \emph{flat} if $\alpha_a$ is flat on $\phi_a(U_a)$ for all $a \in \A.$ We denote by $\Omega_{flat}^k (M)$ the set of all flat $k$-forms on $U,$ and by $\Omega_{flat}^* (M)$ the set of all flat forms on $U.$ 
\end{definition}

\begin{remark}[Orientability of Lipschitz manifolds equipped with an $L^{\infty}$ volume form] \label{orientability} \normalfont
Let $M$ be a Lipschitz manifold equipped with an $L^{\infty}$ volume form $\omega.$
Let $\{(U_a, \phi_a)\}_{a \in \mathcal{A}}$ be an atlas s.t. in every chart $\omega$ is represented by $\omega_a := f_a dx_1 \wedge ... \wedge d x_{2n}$ for some $ f_a \in L^{\infty}(\phi_a(U_a))$ and $f_a>0$ a.e. on $\phi_a(U_a).$
Consider two charts $(U_i, \phi_i), i =a,b$ which have a non-empty intersection. 
Then we have 
$\Phi_{ab}^* \omega_b = \omega_a$ a.e. on $\phi_a(U_a \cap U_b)$ or equivalently $f_a = (f_b \circ \Phi_{ab}) \operatorname{Jac} (\Phi_{ab})$ a.e. on $\phi_a(U_a \cap U_b).$ Since $f_a, f_b>0$ a.e. it follows that $\operatorname{Jac} (\Phi_{ab})>0$ a.e. on $\phi_a(U_a \cap U_b).$
Hence $\Phi_{ab}$ is orientation preserving and therefore $\{(U_a, \phi_a)\}_{a \in \mathcal{A}}$ is an oriented atlas.
As a consequence we conclude that weak symplectic-Lipschitz manifolds are orientable. 
\end{remark}

\begin{definition}
We define \emph{$L^{\infty}$ de Rham cohomology of a Lipschitz manifold $M$} to be the
cohomology of the complex $(\Omega_{flat}^*(M), d)$ and we denote it by $H_{L^{\infty},dR}^*(M).$ 
\end{definition}

The following theorem connects de Rham cohomology with the singular cohomology of $M.$

\begin{theorem} \label{isomorphic}
$L^{\infty}$ de Rham cohomology is isomorphic to the singular cohomology with the real coefficients, i.e.
$H_{L^{\infty},dR}^* (M) \cong H^*(M, \R).$
\end{theorem}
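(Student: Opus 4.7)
The natural plan is to prove Theorem \ref{isomorphic} by running the sheaf-theoretic de Rham argument in the Lipschitz/flat category, in exact analogy with the smooth case. Assigning to each open $U\subseteq M$ the vector space $\Omega_{flat}^k(U)$ defines a sheaf $\underline{\Omega}_{flat}^k$ (flatness is a purely local condition, and restrictions of $L^\infty$ forms to open subsets are still $L^\infty$). The distributional exterior derivative is compatible with restrictions, so we get a complex of sheaves $(\underline{\Omega}_{flat}^*,d)$ together with the inclusion of the constant sheaf $\underline{\R}\hookrightarrow \underline{\Omega}_{flat}^0$ as locally constant functions (which are, trivially, closed flat $0$-forms).

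The first thing I would verify is that each sheaf $\underline{\Omega}_{flat}^k$ is fine. The key input is the existence of Lipschitz partitions of unity subordinate to any locally finite open cover of $M$: standard distance-function bump functions $\max(0,1-\varepsilon^{-1}d(x,K))$ are Lipschitz, and multiplication of a flat form by a Lipschitz function yields a flat form, because for $\rho$ Lipschitz and $\alpha$ flat one has $d(\rho\alpha)=d\rho\wedge\alpha+\rho\,d\alpha$ in the distributional sense, and both summands are $L^\infty$ (using Rademacher to see $d\rho\in L^\infty$). This both produces the partition-of-unity endomorphisms needed to conclude fineness and shows that fine sheaves are acyclic for sheaf cohomology.

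The second, and in my view main, step is a Poincaré lemma for flat forms: on any open ball $B\subseteq\R^n$, every closed flat $k$-form with $k\ge 1$ is exact, with a primitive that is again flat. The plan is to use the classical cone homotopy $h\alpha(x)=\int_0^1 t^{k-1}\,\iota_{x-x_0}\alpha(x_0+t(x-x_0))\,dt$ and verify, by testing against smooth compactly supported forms, that $dh+hd=\mathrm{id}$ in the distributional sense. Because $\alpha$ has $L^\infty$ coefficients, Fubini lets one bound $\|h\alpha\|_\infty$ in terms of $\|\alpha\|_\infty$, so $h\alpha$ is $L^\infty$; applying the homotopy identity to a closed $\alpha$ shows $h\alpha$ has distributional derivative $\alpha\in L^\infty$, so $h\alpha$ is flat. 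This is exactly the content of Whitney's and Teleman's treatment of flat forms (cf.\ the references \cite{Whitney,Teleman,Hainonen-lecture-notes}), so I would invoke their Poincaré lemma rather than redo the estimates. Combined with the injection $\underline{\R}\hookrightarrow\underline{\Omega}_{flat}^0$, this shows $(\underline{\Omega}_{flat}^*,d)$ is a fine resolution of the constant sheaf $\underline{\R}$.

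From here the theorem is formal. Sheaf cohomology computed from a fine resolution gives $H^*(M,\underline{\R})\cong H^*\bigl(\Gamma(M,\underline{\Omega}_{flat}^*),d\bigr)=H^*_{L^\infty,dR}(M)$. On the other hand $M$, being a topological manifold, is locally contractible and paracompact, so sheaf cohomology with coefficients in the constant sheaf $\underline{\R}$ agrees with singular cohomology $H^*(M,\R)$ (for example via the comparison with Čech cohomology and a good cover, or by observing that the singular cochain complex also provides a soft resolution of $\underline{\R}$ on such spaces). Composing the two isomorphisms yields $H^*_{L^\infty,dR}(M)\cong H^*(M,\R)$, as claimed. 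The only delicate point is the flat Poincaré lemma; every other step is a direct transcription of the smooth de Rham proof.
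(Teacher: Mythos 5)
The paper does not prove this theorem itself; it defers entirely to Whitney, Gol'dshtein--Kuz'minov--Shvedov and Teleman, and the argument you sketch (a fine resolution of the constant sheaf $\underline{\R}$ by sheaves of flat forms via Lipschitz partitions of unity, plus a Poincar\'e lemma for flat forms as the one genuinely non-formal ingredient) is precisely the route carried out in those references, so your proposal is a correct expansion of the same approach. The only point to watch is that the presheaf $U \mapsto \Omega_{flat}^k(U)$ of globally bounded flat forms is not literally a sheaf (gluing can destroy the uniform $L^\infty$ bound), so one should resolve $\underline{\R}$ by \emph{locally} flat forms ($L^\infty_{loc}$ with $L^\infty_{loc}$ distributional differential); on a closed manifold with a finite atlas this changes nothing about the global sections or the resulting cohomology.
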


\begin{proof}[Proof of Theorem \ref{isomorphic}]
Follows from  Theorem 12A (p. 230), Theorem 7C (p. 265), IX.12 (p. 272-274), and IX.14 (p. 276-280) from the Whitney's book \cite{Whitney}.

For a proof in the concrete situation of Lipschitz manifolds we refer the reader to \cite[de Rham's Theorem, p. 158]{Goldstein}, \cite[Theorem 2.1]{Teleman}, and \cite[Theorem 5.21]{Teleman-book}.
\end{proof}

Moreover, we have a form of Poincar\'e duality for $L^{\infty}$ de Rham cohomology. 
To make it precise we need to define the integration of $L^{\infty}$ $n$-forms over $M.$
For that we need the following.

\begin{definition}[Lipschitz partition of unity] \label{def-part-unity}
A \emph{Lipschitz partition of unity subordinated to the cover} $(U_a)_{a \in \A}$ is a family $(\rho_a)_{a \in \A}$ of Lipschitz maps $\rho_a: M \arr [0,1]$ such that the following hold:
\begin{itemize}
    \item[(i)] supports $\operatorname{supp} \rho_a$ of $\rho_a$ form a locally finite family,\footnote{A collection of subsets of a topological space $X$ is said to be locally finite, if each point in the space has a neighbourhood that intersects only finitely many of the sets in the collection.}
    \item[(ii)] $\operatorname{supp} \rho_a \subseteq U_a$, for all $a \in \A$,
    \item[(iii)] $\sum_{a \in \A} \rho_a(x) = 1,$ for all $x \in M.$
\end{itemize}
\end{definition}

\begin{lemma} \label{partition-unity}
Let $M$ be a Lipschitz manifold and $\mathcal{U} = (U_a)_{a \in \A}$ an open cover of $M.$ Then there exists a Lipschitz partition of unity subordinated to $\mathcal{U}.$  
\end{lemma}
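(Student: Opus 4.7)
The plan is to imitate the classical smooth partition-of-unity construction, replacing smooth bump functions by the distance-function bumps $\eta(y) = d(y,A)/(d(y,A)+d(y,B))$, which are Lipschitz on metric spaces whenever $A$ and $B$ are disjoint closed sets bounded away from each other. The first step is to fix a metric on $M$ compatible with the Lipschitz structure: since $M$ is second-countable, locally compact and Hausdorff, it is metrizable, and using the charts one can glue the Euclidean metrics (via a preliminary, continuous partition of unity or via an embedding-type argument) to obtain a metric $d$ that is locally bi-Lipschitz to the Euclidean metric in every chart.

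Next I would pass to a convenient refinement of $\mathcal{U}$. Using $\sigma$-compactness of $M$, choose a countable, locally finite open refinement $(V_i)_{i \in \N}$ of $\mathcal{U}$ with each $\overline{V_i}$ compact and contained in some chart $U_{a(i)}$. Then apply the standard shrinking lemma twice to produce two further locally finite open covers $(W_i)$ and $(W_i')$ with $\overline{W_i} \subseteq W_i'$ and $\overline{W_i'} \subseteq V_i$, with $\overline{W_i}, \overline{W_i'}$ still compact in the chart $U_{a(i)}$.

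The bump functions are then defined chart by chart. Set $K_i := \phi_{a(i)}(\overline{W_i}) \subset \R^n$ and $F_i := \phi_{a(i)}(W_i')$, and define
\[
\eta_i(y) := \frac{d_{\R^n}(y,\R^n \setminus F_i)}{d_{\R^n}(y,\R^n \setminus F_i) + d_{\R^n}(y, K_i)}, \qquad y \in \R^n.
\]
Since $K_i$ is compact and disjoint from the closed set $\R^n\setminus F_i$, the denominator is uniformly bounded below on any bounded neighbourhood of $K_i$, so $\eta_i$ is Lipschitz on $\R^n$, equals $1$ on $K_i$, and vanishes outside $F_i$. Pulling back, $\psi_i := \eta_i \circ \phi_{a(i)}$ on $U_{a(i)}$ and extending by zero gives a function on $M$ with compact support inside $\overline{W_i'} \subset V_i$. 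Using that $\phi_{a(i)}$ is bi-Lipschitz on a neighbourhood of $\overline{W_i'}$ and that $\operatorname{supp}\psi_i$ is a positive $d$-distance from $M\setminus U_{a(i)}$, one checks the extended $\psi_i$ is globally Lipschitz on $(M,d)$.

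Finally, set $\Phi := \sum_i \psi_i$ and $\rho_i := \psi_i / \Phi$. Local finiteness of the $\overline{V_i}$ makes $\Phi$ a locally finite sum of Lipschitz functions; since the $W_i$ cover $M$ and $\psi_i \equiv 1$ on $W_i$, we have $\Phi \geq 1$ everywhere, so $1/\Phi \leq 1$ is bounded and (on any neighbourhood where the sum is finite) Lipschitz, hence so is each $\rho_i$. Properties (i)--(iii) of Definition \ref{def-part-unity} are then immediate. The main technical point — which I expect to be the only place requiring care — is the last one: ensuring that the globally defined $\rho_i$ really are Lipschitz on all of $M$, not merely locally, which is where the uniform positive lower bound $\Phi \geq 1$ and the compact-support separation argument for $\psi_i$ enter crucially.
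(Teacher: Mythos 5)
Your construction is correct in outline, but it is a genuinely different route from the paper: the paper does not prove the lemma at all, it simply cites Luukkainen--V\"ais\"al\"a (Theorems 3.5 and 5.3 of that reference), whereas you carry out the classical construction by hand — paracompactness of a second-countable locally compact Hausdorff space, a locally finite refinement with compact closures inside charts, a double shrinking, metric bump functions $d(\cdot,A)/(d(\cdot,A)+d(\cdot,B))$, and normalization by the sum $\Phi \geq 1$. What your approach buys is self-containedness and an explicit quantitative handle on the Lipschitz constants of the $\rho_i$; what the citation buys is that the delicate foundational point — what ``Lipschitz function on a Lipschitz manifold'' even means — is handled once and for all in the reference.

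That foundational point is the one place where your write-up should be tightened. Definition \ref{lip-map-def} defines Lipschitz maps only between metric spaces, and $M$ carries no preferred metric, so the statement of Definition \ref{def-part-unity} implicitly means either (a) Lipschitz with respect to some metric compatible with the charts, or (b) locally Lipschitz when read in any chart. Your first paragraph opts for (a) but only gestures at the construction of such a metric; note that for your final claim (global Lipschitzness of $\rho_i$ from local Lipschitzness plus compact support) you need the glued metric to be quasi-convex (a chain or length metric), since local Lipschitzness does not imply global Lipschitzness for an arbitrary compatible metric, and for non-compact $M$ (the lemma is stated in that generality) one should settle for interpretation (b), which is chart-independent because the transition maps are locally bi-Lipschitz and which is all that the paper ever uses. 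Two smaller omissions: your partition is indexed by the refinement $(V_i)_{i\in\N}$ rather than by $\A$, so you should finish by grouping, $\rho_a := \sum_{i:\,a(i)=a}\rho_i$, and check $\operatorname{supp}\rho_a \subseteq U_a$ using local finiteness; and the lower bound on the denominator of $\eta_i$ holds globally on $\R^n$ (it is at least $\operatorname{dist}(K_i,\R^n\setminus F_i)>0$ by the triangle inequality), which is cleaner than the ``bounded neighbourhood'' statement you give. None of these affects the validity of the argument.
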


\begin{proof}[Proof of Lemma \ref{partition-unity}]
Follows from \cite[Theorem 3.5 and Theorem 5.3]{Vaisala}.
\end{proof}

Choose a finite cover $\{U_i\}_{1 \leq i \leq l}$ of $M$ by charts $(U_i,\phi_i)_{1 \leq i \leq l},$ and a partition of unity  $(\rho_i)_{1 \leq i \leq l}$ subordinated to $\mathcal{U} :=\{U_i\}_{1 \leq i \leq l}.$ 
For $\alpha = \{\alpha_i\}_{1 \leq i \leq l} \in \Omega^n(M)$ we define
\begin{equation}\label{integration}
\int_M \alpha := \sum_{i=1}^l \int_{\phi_i(U_i)} (\rho_i \circ \phi_i^{-1}) \alpha_i.   
\end{equation}

One can check that \eqref{integration} does not depend on choices of cover and partition of unity.

We are now ready to state the $L^{\infty}$ Poincar\'e duality. 

\begin{theorem}[$L^{\infty}$-Poincar\'e duality] \label{PD}
The pairing
\begin{align}
    \langle \cdot, \cdot \rangle: H_{L^{\infty},dR}^k(M) &\times H_{L^{\infty},dR}^{n-k}(M) \arr \R, \nonumber \\
    ([\alpha], [\beta]) &\mapsto \int_M \alpha \wedge \beta \label{pd-map}
\end{align} \label{poincare-duality}
is non-degenerate.
\end{theorem}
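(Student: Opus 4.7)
The plan is to deduce the non-degeneracy of the wedge--integration pairing \eqref{pd-map} from the classical Poincar\'e duality for closed oriented topological $n$-manifolds applied to singular cohomology with real coefficients, using Theorem \ref{isomorphic} as the bridge. Since $M$ is closed and orientable (Remark \ref{orientability}), it carries a fundamental class $[M] \in H_n(M, \R)$, and the singular pairing $H^k(M,\R) \times H^{n-k}(M,\R) \arr \R$ defined by $([u],[v]) \mapsto \langle u \smile v, [M] \rangle$ is non-degenerate. I would show that, up to the Whitney isomorphism $\Psi: H^*_{L^{\infty},dR}(M) \arr H^*(M,\R)$ of Theorem \ref{isomorphic}, the pairing \eqref{pd-map} coincides with this singular pairing.

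The proof reduces to verifying two compatibility statements. First, that $\Psi$ is a ring isomorphism, i.e.\ $\Psi([\alpha] \wedge [\beta]) = \Psi[\alpha] \smile \Psi[\beta]$ for every pair of flat cohomology classes. Second, that $\langle \Psi[\eta], [M] \rangle = \int_M \eta$ for every closed $\eta \in \Omega^n_{flat}(M)$. Both are essentially built into Whitney's construction of the isomorphism (see Chapter IX of \cite{Whitney}) and are transported to the Lipschitz setting in \cite[Theorem 2.1]{Teleman} and \cite[de Rham's Theorem, p.~158]{Goldstein}: Whitney's auxiliary complex of flat cochains is simultaneously quasi-isomorphic to the singular cochain complex (with cup product) and to $(\Omega^*_{flat}(M), d)$ (with wedge product), and the identification on top-degree classes is precisely integration over $M$ against the fundamental class.

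Granted these two items, the claim follows at once from the chain of equalities
\begin{equation*}
\langle [\alpha], [\beta] \rangle = \int_M \alpha \wedge \beta = \langle \Psi([\alpha] \wedge [\beta]), [M] \rangle = \langle \Psi[\alpha] \smile \Psi[\beta], [M] \rangle,
\end{equation*}
combined with the non-degeneracy of the singular Poincar\'e pairing: if $\langle [\alpha], [\beta] \rangle = 0$ for every $[\beta]$, then $\Psi[\alpha]$ pairs trivially with every class in $H^{n-k}(M, \R)$, hence $\Psi[\alpha] = 0$, hence $[\alpha] = 0$, and symmetrically in the second argument.

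The main obstacle is not a subtle argument but careful bookkeeping: one must confirm that Whitney's comparison map is indeed multiplicative and that its effect on top-dimensional classes is given by integration as in \eqref{integration}. A more intrinsic alternative would be to develop a Mayer--Vietoris sequence for $(\Omega^*_{flat}, d)$ (using the Lipschitz partitions of unity of Lemma \ref{partition-unity} to split classes) together with a Poincar\'e lemma and its compactly supported counterpart for flat forms on $\R^n$, and then induct over a finite good cover of $M$ via the five-lemma. This route is self-contained but requires two non-trivial technical pieces--a flat Poincar\'e lemma in both variants--which Whitney's theory already packages, so I would favour the reduction via Theorem \ref{isomorphic}.
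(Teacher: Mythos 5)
Your proposal is correct and follows essentially the same route as the paper, which gives no independent argument but simply defers the statement to Teleman's Theorem 2.1; that reference (together with Whitney's Chapter IX) establishes exactly the two compatibility facts you isolate, namely that the flat-form/singular comparison map is multiplicative and that it sends a top-degree class to its integral against the fundamental class, after which non-degeneracy follows from topological Poincar\'e duality as you describe. The only point worth keeping in mind is that the finite-dimensionality of $H^*(M,\R)$ for closed $M$ is what lets you pass from ``pairs trivially with everything'' to ``is zero'', but that is automatic here.
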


For the proof of Theorem \ref{PD} see \cite[Theorem 2.1]{Teleman}.


\section{Proof of Theorem \ref{main-thm}} \label{section-proof}

We start this section with the comparison of symplectic-Lipschitz and weak symplectic-Lipschitz manifolds.

\begin{proposition} \label{lip-symp-equiv}
Symplectic-Lipschitz manifolds are weak symplectic-Lipschitz. 
\end{proposition}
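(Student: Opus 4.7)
The plan is to take the symplectic-Lipschitz atlas $\{(U_a,\phi_a)\}_{a\in\A}$ supplied by Definition \ref{lip-symp-def} and exhibit the tautological collection $\omega := \{\omega_0|_{\phi_a(U_a)}\}_{a\in\A}$ as a closed, non-degenerate flat 2-form on $M$ in the sense of Definition \ref{weak-lip-mfld}. The construction is essentially read off from the definitions, so the proof amounts to verifying three conditions chart by chart: the cocycle compatibility under pullback, distributional closedness with $L^\infty$ derivative, and non-degeneracy of the $n$-th wedge power.

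First I would verify that $\omega$ is a well-defined $L^\infty$ 2-form on $M$. Each local piece $\omega_a := \omega_0|_{\phi_a(U_a)}$ has constant coefficients and hence trivially belongs to $L^\infty$, while the cocycle compatibility $\Phi_{ab}^*\omega_b = \omega_a$ a.e.\ is precisely the hypothesis of Definition \ref{lip-symp-def}, read through the bi-Lipschitz pullback of Equation \eqref{pullback}. Closedness is immediate: $\omega_a$ is smooth and classically closed, so $d\omega_a = 0$ in the distributional sense of Definition \ref{ext-dif} (the integration-by-parts identity collapses to Stokes' theorem on smooth forms). This places $\omega$ in $\Omega_{flat}^2(M)$ with $d\omega = 0$. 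For non-degeneracy, in every chart $\omega_a^{\wedge n}$ is a strictly positive constant multiple of the Lebesgue volume form on $\R^{2n}$, and therefore qualifies as an $L^\infty$ volume form in the sense of Definitions \ref{volume-form} and \ref{non-deg-form}.

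The only subtlety, if one can call it that, is bookkeeping: one must check that the a.e.\ identity $\Phi_{ab}^*\omega_0 = \omega_0$ assumed in Definition \ref{lip-symp-def} really does patch the local forms $\omega_a$ into a single global $L^\infty$ form on $M$, and that all of the above properties are invariant under modification of the coefficients on a set of Lebesgue measure zero. The first point is built into the definition of $L^\infty$ forms on Lipschitz manifolds, while the second follows from Rademacher's Theorem \ref{rademacher} together with the commutation identities of Proposition \ref{comutativity}, which guarantee that $d$, $\wedge$, and bi-Lipschitz pullbacks all descend to the a.e.-equivalence classes. No further analytic input is required.
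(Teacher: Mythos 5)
Your proposal is correct and follows essentially the same route as the paper: both take the tautological collection $\omega = \{\omega_0|_{\phi_a(U_a)}\}_{a\in\A}$, note that it is an $L^\infty$ form with constant coefficients whose chart compatibility is exactly the hypothesis of Definition \ref{lip-symp-def}, that it is closed (hence flat), and that $\omega^{\wedge n}$ gives the standard volume form in each chart. Your extra remarks on a.e.-invariance and Proposition \ref{comutativity} are sound bookkeeping that the paper leaves implicit.
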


\begin{proof}
Let $M$ be a symplectic-Lipschitz manifold of dimension $2n.$
Let $\{(U_a, \phi_a)\}_{a \in \A}$ be the maximal symplectic-Lipschitz atlas of $(M, \omega).$ 
By Definition \ref{lip-symp-def}, $M$ is equipped with a well-defined 2-form $\omega = \{\omega_0\vert_{\phi_i(U_i)}\}_{1 \leq i \leq k},$ where 
$$\omega_0 = \sum_{i=1}^n dx_i \wedge dx_{i+n}$$ 
is the standard symplectic form on $\R^{2n}.$
Notice that $\omega$ is an $L^{\infty}$ 2-form since it has constant coefficients in every chart, and  
that $d\omega = 0.$ Hence $\omega$ is a closed flat 2-form.
The non-degeneracy follows from the fact that $\omega^{\wedge n}$ induces the standard volume form in every chart.
This completes the proof of Proposition \ref{lip-symp-equiv}.
\end{proof}

We now switch to the proof of Theorem \ref{main-thm}.

\begin{proof}[Proof of Theorem \ref{main-thm}]
Let $(M,\omega)$ be a closed weak symplectic-Lipschitz manifold of dimension $2n.$
Since $\omega$ is a closed flat 2-form it defines an element $[\omega]$ in $H_{L_{\infty},dR}^2(M).$ 
Then from the non-degeneracy of $\omega$ and \eqref{pd-map} we get that
\begin{align*}
    \langle [\omega^{\wedge k}], [\omega^{\wedge (n-k)}] \rangle &= \int_M \vol > 0,
\end{align*}
since $\omega^{\wedge n}$ is an $L^{\infty}$ volume form,
Hence by Theorem \ref{poincare-duality} ($L^{\infty}$ Poincar\'e duality) it follows that $[\omega^{\wedge k}]\neq 0.$ 
Therefore $H^{2k}(M,\R) \cong H_{L_{\infty},dR}^{2k}(M) \neq 0.$ This completes the proof of Theorem \ref{main-thm}.
\end{proof}

\end{document}